\theoremstyle{plain}
\newtheorem{lemma}{Lemma}[section]
\newtheorem{theorem}[lemma]{Theorem}
\newtheorem{cor}[lemma]{Corollary}
\newtheorem{prop}[lemma]{Proposition}
\newtheorem{exam}[lemma]{\normalfont \scshape
 Example}
\newcommand{\R}{\mathbb{R}}
\newcommand{\N}{\mathbb{N}}
\newcommand{\norm}[1]{\left\Vert#1\right\Vert}
\newcommand{\abs}[1]{\left\vert#1\right\vert}
\newcommand{\set}[1]{\left\{#1\right\}}
\newcommand{\bfx}{\bm{x}}
\newcommand{\bfzero}{\bm{0}}
\newcommand{\bfone}{\bm{1}}
\newcommand{\bfa}{\bm{a}}
\newcommand{\bfb}{\bm{b}}
\newcommand{\bfc}{\bm{c}}
\newcommand{\bfd}{\bm{d}}
\newcommand{\bfe}{\bm{e}}
\newcommand{\bfQ}{\bm{Q}}
\newcommand{\bfR}{\bm{R}}
\newcommand{\bfU}{\bm{U}}
\newcommand{\bfu}{\bm{u}}
\newcommand{\bfv}{\bm{v}}
\newcommand{\bfV}{\bm{V}}
\newcommand{\bfX}{\bm{X}}
\newcommand{\bfY}{\bm{Y}}
\newcommand{\bfZ}{\bm{Z}}
\newcommand{\bfeta}{\bm{\eta}}
\begin{document}

\title{The Multivariate Piecing-Together Approach Revisited}

\author{Stefan Aulbach}

\author{Michael Falk}

\author{Martin Hofmann}

\address{University of W\"{u}rzburg,
Institute of Mathematics,  Emil-Fischer-Str.~30, 97074~W\"{u}rz\-burg, Germany}

\email{stefan.aulbach@uni-wuerzburg.de\\
falk@mathematik.uni-wuerzburg.de\\ hofmann.martin@mathematik.uni-wuerzburg.de}

\subjclass[2010]{Primary 62G32, secondary 62H99, 60G70}

\keywords{Copula, copula process, $D$-norm, domain of multivariate attraction, empirical
copula, GPD-copula, max-stable process, multivariate extreme value distribution, multivariate
generalized Pareto distribution, peaks-over-threshold, piecing-together approach}

\begin{abstract}
The univariate Piecing-Together approach (PT) fits a univariate generalized Pareto
distribution (GPD) to the upper tail of a given distribution function in a continuous manner. A
multivariate extension was established by \citet{aulbf11}: The upper tail of a given copula
$C$ is cut off and replaced by a multivariate GPD-copula in a continuous manner, yielding a
new copula called a PT-copula. Then each margin of this PT-copula is transformed by a
given univariate distribution function. This provides a multivariate distribution function with
prescribed margins, whose copula is a GPD-copula that coincides in its central part with $C$.
In addition to \citet{aulbf11}, we achieve in the present paper an exact representation of
the PT-copula's upper tail, giving further insight into the multivariate PT approach. A variant
based on the empirical copula is also added. Furthermore our findings enable us to establish
a functional PT version as well.
\end{abstract}

\maketitle

\section{Introduction}

As shown by \citet{balh74} and \citet{pick75}, the upper tail of a univariate distribution
function $F$ can reasonably be approximated only by that of a \emph{generalized Pareto
distribution} (GPD), which leads to the Peaks-Over-Threshold (POT) approach: Set for a
univariate random variable $X$  with distribution function $F$
\begin{align*}
F^{[x_0]}(x)={\rm P}(X\le x\mid X> x_0)
=\frac{F(x)-F(x_0)}{1-F(x_0)},\qquad x\ge x_0,
\end{align*}
where we require $F(x_0)< 1$. The univariate POT is the
approximation of the upper tail of $F$ by that of a GPD
\begin{align*}
F(x)&=\{1-F(x_0)\}F^{[x_0]}(x)+F(x_0)\\
&\approx_{\mathrm{POT}}
\{1-F(x_0)\}Q_{\gamma,\mu,\sigma}(x)+F(x_0),\qquad x\ge x_0,
\end{align*}
where $\gamma$, $\mu$, $\sigma$ are shape, location and scale parameter of the GPD $Q$,
respectively. The family of univariate standardized GPD is given by
\begin{align*}
Q_{1,\alpha}(x)&=1-x^{-\alpha},\qquad x\ge 1,\\
Q_{2,\alpha}(x)&=1-(-x)^{\alpha},\qquad -1\le x\le 0,\\
Q_3(x)&=1-\exp(-x),\qquad x\ge 0,
\end{align*}
being the Pareto, beta and exponential GPD. Note that $Q_{2,1}(x)=1+x$, $-1\le x\le 0$, is
the uniform distribution function on $(-1,0)$. Multivariate GPD with these margins will play a
decisive role in what follows.

The preceding considerations lead to the univariate Piecing-Together approach (PT), by
which the underlying distribution function $F$ is replaced by
\begin{equation}\label{eqn:univariate_pt}
F_{x_0}^*(x)=\begin{cases} F(x),&x< x_0,\\
\{1-F(x_0)\}Q_{\gamma,\mu,\sigma}(x)+F(x_0),&x\ge x_0,
\end{cases}
\end{equation}
typically in a continuous manner. This approach aims at an  investigation of the upper end of
$F$ beyond observed data. Replacing $F$ in \eqref{eqn:univariate_pt} by the empirical
distribution function of the data provides in particular a semiparametric approach to the
estimation of high quantiles; see, e.g., \citet[Section 2.3]{reist07}.

A multivariate extension of the univariate PT approach was developed in \citet{aulbf11}
and,  for illustration, applied to operational loss data. This approach is based on the idea
that a multivariate distribution function $F$ can be decomposed into its copula $C$ and its
marginal distribution functions. The multivariate PT approach then consists of the two steps:
\begin{enumerate}
\item The upper tail of the given $d$-dimensional copula $C$ is cut off and substituted
    by the upper tail of a multivariate \emph{GPD-copula} in a continuous manner such
    that the result is again a copula, called a PT-copula.  Figure \ref{fig:pt-approach}
    illustrates the approach in the bivariate case: The copula $C$ is replaced in the upper
    right rectangle of the unit square by a GPD-copula $Q$; the lower part of $C$ is kept
    in the lower left rectangle, whereas the other two rectangles are needed for a
    continuous transition from $C$ to $Q$.

\item Univariate distribution functions $F^*_1, \dots , F^*_d$ are injected into the
    resulting copula.
\end{enumerate}
Taken as a whole, this approach provides a multivariate distribution function with prescribed
margins $F_i^*$, whose copula coincides in its lower or central part with $C$ and in its
upper tail with a GPD-copula.

\begin{figure}
\input{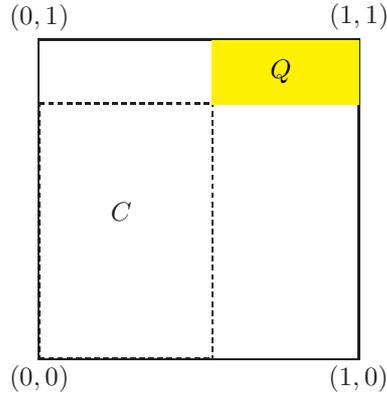}
\caption{The upper tail of a given  copula $C$ is cut off and
replaced by GPD-copula $Q$.}\label{fig:pt-approach}
\end{figure}

While in the paper by \citet{aulbf11} it was merely shown that the generated  PT-copula is
a GPD-copula, we achieve in the present paper an \emph{exact} characterization, yielding
further insight into the multivariate PT approach. A variant based on the empirical copula is
also added. Our findings enable us to establish a functional PT version as well.

The present paper is organized as follows.  In Section
\ref{sec:auxiliary_results} we compile basic definitions,
auxiliary results and tools. The multivariate PT result by
\citet{aulbf11} will be revisited and greatly improved in Section
\ref{sec:multivariate_piecing_together}. In Section
\ref{sec:pt_a_functional_version} we will extend the multivariate
PT approach to functional data.

\section{Auxiliary Results and Tools}\label{sec:auxiliary_results}

In this section we compile several auxiliary results and tools
from multivariate extreme value theory (EVT). Precisely, we
characterize in Proposition
\ref{prop:representation_of_copula_in_domain_of_attraction},
Corollary \ref{cor:expansion_of_copula} and Corollary
\ref{cor:characterization_of_domain_of_attraction_for_copula} the
max-domain of attraction of a multivariate distribution function
in terms of its copula. This implies an expansion of the lower
tail of a survival copula in Corollary \ref{cor:tail_copula}.
Lemma \ref{lem:characterization_of_multivariate_gpd} provides a
characterization of multivariate GPD in terms of random vectors.
For recent accounts of basic and advanced topics of EVT, we refer
to the monographs by \citet{dehaf06}, \citet{resn07,resn08} and
\citet{fahure10}, among others.

 Let $F$ be an arbitrary $d$-dimensional distribution function that is in the \emph{domain of attraction}
 of a $d$-dimensional extreme value distribution (EVD) $G$ (denoted by $F\in\mathcal D(G)$), i.e.,
 there exist norming constants $\bfa_n>\bfzero\in\R^d$, $\bfb_n\in\R^d$ such that
\begin{equation*}
F^n(\bfa_n\bfx+\bfb_n)\to_{n\to\infty}G(\bfx), \qquad \bfx\in\R^d,
\end{equation*}
where all operations on vectors are meant componentwise.  The distribution function $G$ is
max-stable, i.e., there exist norming constants $\bfc_n>\bfzero\in\R^d$, $\bfd_n\in\R^d$
with
\[
G^n(\bfc_n\bfx+\bfd_n)=G(\bfx),\qquad \bfx\in\R^d.
\]
The one-dimensional margins $G_i$ of $G$ are up to scale and
location parameters univariate EVD. With shape parameter
$\alpha>0$, the family of (univariate) standardized EVD is
\begin{align*}\label{eqn:univariate_EVD}
G_{1,\alpha}(x)&=\exp\left(-x^{-\alpha}\right),\qquad x>0,\nonumber\\
G_{2,\alpha}(x)&=\exp\left\{-(-x)^{\alpha}\right\},\qquad x\le 0,\\
G_3(x)&=\exp\left(-e^{-x}\right),\qquad x\in\R,\nonumber
\end{align*}
being the Fr\'{e}chet, (reverse) Weibull and Gumbel  EVD, respectively.

The following two results are taken from \citet{aulbf11}.

\begin{prop}\label{prop:representation_of_copula_in_domain_of_attraction}
A distribution function $F$ with copula $C_F$ satisfies $F\in\mathcal{D}(G)$ if, and only if,
this is true for the univariate margins of $F$ and if the expansion
\begin{equation}\label{eqn:expansion_of_copula}
C_F(\bfu)=1-\norm{\bfone-\bfu}_D+o(\norm{1-\bfu})
\end{equation}
holds uniformly for $\bfu\in[0,1]^d$, where $\norm\cdot_D$ is some $D$-norm.
\end{prop}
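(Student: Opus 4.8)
The plan is to pass from $F$ to its copula and to exploit that, after standardizing the margins, every max-stable $G$ is governed by a $D$-norm. First I would invoke the classical decomposition of the multivariate domain of attraction: $F\in\mathcal D(G)$ holds if and only if each margin satisfies $F_i\in\mathcal D(G_i)$ and the copulas of the componentwise maxima converge, i.e.
\[
C_F\big(u_1^{1/n},\dots,u_d^{1/n}\big)^n \To C_G(\bfu),\qquad \bfu\in[0,1]^d,
\]
where $C_G$ is the copula of the limiting EVD $G$. Since $G$ is max-stable, transforming its margins to the standard negative exponential form $G_i(x)=e^{x}$, $x\le 0$, yields $G(\bfx)=\exp(-\norm{\bfx}_D)$ for some $D$-norm $\norm{\cdot}_D$, whence $C_G(\bfu)=\exp(-\norm{\log\bfu}_D)$ with $\log\bfu=(\log u_1,\dots,\log u_d)\le\bfzero$. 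This already shows that $C_G$ itself obeys the claimed expansion, and reduces the whole statement to proving that the copula convergence above is equivalent to $C_F(\bfu)=1-\norm{\bfone-\bfu}_D+o(\norm{\bfone-\bfu})$.

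For the implication from the expansion to the copula convergence I would substitute $\bfv=(u_1^{1/n},\dots,u_d^{1/n})$ and use $1-u_i^{1/n}=1-\exp((\log u_i)/n)\sim\abs{\log u_i}/n$ as $n\to\infty$, so that $\bfone-\bfv\sim n^{-1}\abs{\log\bfu}$ componentwise. Inserting this into the expansion and using the $1$-homogeneity of $\norm{\cdot}_D$ together with the fact that a $D$-norm depends on its argument only through componentwise absolute values gives $1-C_F(\bfv)\sim n^{-1}\norm{\log\bfu}_D$; with $\log(1-x)\sim-x$ this yields $n\log C_F(\bfv)\to-\norm{\log\bfu}_D$, that is $C_F(\bfv)^n\to C_G(\bfu)$. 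The only point requiring care is that the remainder must stay controlled along the contracting family $\bfv$, which is precisely what the \emph{uniform} $o(\cdot)$ in the hypothesis supplies (all norms being equivalent, the choice of norm inside the $o$-symbol is irrelevant).

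For the converse I would read off from the copula convergence the existence of the stable tail dependence function
\[
\ell(\bfx)=\lim_{t\downarrow0}\frac{1-C_F(\bfone-t\bfx)}{t},\qquad \bfx\ge\bfzero,
\]
and identify the sought expansion with the assertion $\ell=\norm{\cdot}_D$. The main obstacle is twofold. First, one must show that $\ell$ is genuinely a $D$-norm: its $1$-homogeneity is immediate, while the representation $\ell(\bfx)=E\big(\max_i x_i Z_i\big)$ with $E(Z_i)=1$ follows from the spectral (Pickands) representation of the exponent measure of the max-stable $G$. Second, and more delicate, one must upgrade the ray-wise limit defining $\ell$ to the fully \emph{uniform} expansion over $[0,1]^d$; here I would combine the monotonicity of $\bfu\mapsto C_F(\bfu)$ with the Lipschitz continuity of the $D$-norm and a compactness argument on the unit simplex, so that convergence on a dense grid, reinforced by monotonicity, forces uniformity in the manner of Dini's theorem. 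The marginal hypothesis $F_i\in\mathcal D(G_i)$ serves only to disentangle the contribution of the margins from that of the dependence structure and is disposed of by the one-dimensional theory.
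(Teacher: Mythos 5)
The paper itself gives no proof of this proposition: it states that the result is taken from \citet{aulbf11}, so there is no in-paper argument to compare against. Your proposal is correct and follows essentially the same route as that cited source: the Deheuvels--Galambos decomposition of $F\in\mathcal D(G)$ into marginal attraction plus copula convergence $C_F(\bfu^{1/n})^n\to C_G(\bfu)$, the identification $C_G(\bfu)=\exp(-\norm{\log\bfu}_D)$ via the $D$-norm (spectral) representation of the standardized max-stable limit, and the passage between the ray-wise limit $\ell(\bfx)=\lim_{t\downarrow0}\{1-C_F(\bfone-t\bfx)\}/t=\norm{\bfx}_D$ and the uniform expansion by homogeneity together with a P\'olya-type (monotonicity plus continuity of the limit on a compact set) uniformity argument, which is the correct tool where you invoke Dini.
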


A $D$-norm $\norm\cdot_D$ on $\R^d$ is defined by
\[
\norm \bfx_D:={\rm E}\left\{\max_{1\le i\le d}(\abs{x_i}Z_i)\right\},\qquad \bfx\in\R^d,
\]
where $\bfZ=(Z_1,\dots,Z_d)$ is an arbitrary random vector which satisfies
$\bfZ\in[0,c]^d$ for some $c>0$ together with ${\rm E}(Z_i)=1$, $1\le i\le d$. In this case
$\bfZ$ is called \emph{generator} of $\norm\cdot_D$. Note that $\bfZ$ is not uniquely
determined.

For example, any random vector of the form
$\bfZ=2(U_1,\dots,U_d)$, with $(U_1,\dots,U_d)$ following an
arbitrary copula, can be utilized as a generator. This embeds the
set of copulas into the set of $D$-norms.

The index $D$ reflects the fact that for $(t_1,\dots,t_{d-1})\in[0,1]^{d-1}$ with $t_1 +
\dots + t_{m-1}\le 1$,
\[
D(t_1,\dots,t_{d-1}):=\norm{\left(t_1,\dots,t_{d-1},1-\sum_{i=1}^{d-1}t_i\right)}_D
\]
is the \emph{Pickands dependence function}, which provides another
way of representing a multivariate EVD $G$ with standard negative
exponential margins:
\[
G(\bfx)=\exp\left(-\norm{\bfx}_D\right)=
\exp\left(-\norm{\bfx}_1D\left(\frac{x_1}{\norm{\bfx}_1}\,\dots,\frac{x_{d-1}}{\norm{\bfx}_1}\right)\right),\quad
\]
for $\bfx\le\bfzero\in\R^d$, where $\norm{\bfx}_1=\abs{x_1}+\dots+\abs{x_d}$ is the
usual $p$-norm on $\R^d$ with $p=1$; for details we refer to \citet[Section
4.4]{fahure10}.

The following consequence of Proposition
\ref{prop:representation_of_copula_in_domain_of_attraction} is
obvious. This result is also already contained in \citet{aulbf11}.

\begin{cor}\label{cor:expansion_of_copula}
Let $F=C$ be a copula itself. Then $C\in\mathcal{D}(G)$ $\iff$
\eqref{eqn:expansion_of_copula} holds.
\end{cor}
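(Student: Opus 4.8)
The plan is to specialize Proposition \ref{prop:representation_of_copula_in_domain_of_attraction} to the case $F=C$ and then to show that the marginal condition appearing there is automatically fulfilled. Since a copula is its own copula, we have $C_F=C$ when $F=C$, so the proposition already supplies the expansion \eqref{eqn:expansion_of_copula} as the ``copula part'' of the characterization. It therefore remains only to argue that the requirement on the univariate margins is vacuous in this setting.

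First I would recall that each univariate margin of a $d$-dimensional copula $C$ is the uniform distribution function on $[0,1]$. Hence, for $F=C$, the biconditional in Proposition \ref{prop:representation_of_copula_in_domain_of_attraction} reads: $C\in\mathcal{D}(G)$ if, and only if, the uniform margins lie in a univariate max-domain of attraction and the expansion \eqref{eqn:expansion_of_copula} holds. The whole task thus reduces to checking the first of these two conditions unconditionally.

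Next I would verify that the uniform distribution on $[0,1]$ always belongs to a univariate max-domain of attraction. Writing $F_0(x)=x$ on $[0,1]$ and choosing the norming constants $a_n=1/n$ and $b_n=1$, one computes for $x\le 0$ that $F_0^n(x/n+1)=(1+x/n)^n\to\exp(x)=G_{2,1}(x)$, the reverse Weibull EVD with shape parameter $\alpha=1$. Thus the uniform margins lie in $\mathcal{D}(G_{2,1})$ with no further hypotheses, so the marginal condition in Proposition \ref{prop:representation_of_copula_in_domain_of_attraction} is always met when $F=C$.

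Combining these observations, the marginal hypothesis drops out and the asserted equivalence $C\in\mathcal{D}(G)\iff\eqref{eqn:expansion_of_copula}$ follows immediately from Proposition \ref{prop:representation_of_copula_in_domain_of_attraction}. The only point requiring any care is the elementary limit showing that uniform margins are attracted to $G_{2,1}$, and this is entirely routine; there is no genuine obstacle here, which is precisely why the corollary can be regarded as an obvious consequence of the preceding proposition.
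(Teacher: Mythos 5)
Your proposal is correct and follows exactly the route the paper has in mind: the paper offers no written proof, simply declaring the corollary an ``obvious'' consequence of Proposition \ref{prop:representation_of_copula_in_domain_of_attraction}, and your argument supplies precisely the missing routine details (uniform margins, and the elementary limit $(1+x/n)^n\to\exp(x)$ showing they lie in $\mathcal{D}(G_{2,1})$).
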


The next result provides an expansion of the lower tail of the
\emph{survival copula}
\[
\bar C(u_1,\dots,u_d)={\rm P}(1-U_1\le u_1,\dots,1-U_d\le u_d),\qquad
\bfu\in[0,1]^d
\]
corresponding to any random vector $\bfU$, whose distribution is a copula $C$ with $
C\in\mathcal D(G)$. It will be used in the derivation of Proposition
\ref{prop:justification_of_multivariate_pt-approach}.

\begin{cor}\label{cor:tail_copula}
Let $(U_1,\dots,U_d)$ follow a copula $C\in\mathcal{D}(G)$, with corresponding $D$-norm
generated by the random vector $\bfZ=(Z_1,\dots,Z_d)$. Then for $\bfx\le\bfzero\in\R^d$
\[
\frac{{\rm P}(U_1>1+tx_1,\dots, U_d>1+tx_d)}t \to_{t\downarrow 0} {\rm E}\left\{\min_{1\le i\le d}(\abs{x_i}Z_i)\right\}=:\lambda(\bfx),
\]
where the function $\lambda$ is known as the \emph{tail copula}
(\citet{Kluppelberg_Kuhn_Peng_2006}).
\end{cor}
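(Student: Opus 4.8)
The plan is to expand the survival probability by inclusion--exclusion and then linearize each of the resulting copula values by means of the uniform expansion in Corollary~\ref{cor:expansion_of_copula}. Writing $A_i^c=\set{U_i\le 1+tx_i}$ and noting that $1+tx_i=1-t\abs{x_i}\le 1$ for $t>0$, complementation gives
\[
{\rm P}\Bigl(\bigcap_{i=1}^d\set{U_i>1+tx_i}\Bigr)
=1-\sum_{\emptyset\neq I\subseteq\{1,\dots,d\}}(-1)^{\abs I+1}\,{\rm P}\Bigl(\bigcap_{i\in I}A_i^c\Bigr).
\]
Each joint probability ${\rm P}(\bigcap_{i\in I}A_i^c)={\rm P}(U_i\le 1+tx_i,\ i\in I)$ is the copula $C$ evaluated at the point $\bfu^{(I)}$ whose $i$-th coordinate equals $1+tx_i$ for $i\in I$ and $1$ otherwise.

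First I would apply the expansion $C(\bfu)=1-\norm{\bfone-\bfu}_D+o(\norm{\bfone-\bfu})$ at each $\bfu^{(I)}$. Since $\bfone-\bfu^{(I)}$ has $i$-th coordinate $t\abs{x_i}$ for $i\in I$ and $0$ otherwise, homogeneity of the $D$-norm yields $\norm{\bfone-\bfu^{(I)}}_D=t\,{\rm E}\set{\max_{i\in I}(\abs{x_i}Z_i)}$, while the remainder is $o(t)$ uniformly, because $\norm{\bfone-\bfu^{(I)}}=O(t)$ as $t\downarrow 0$ and there are only finitely many $I$. Substituting and using $\sum_{\emptyset\neq I}(-1)^{\abs I+1}=1$, the constant contributions cancel the leading $1$, leaving
\[
{\rm P}\Bigl(\bigcap_{i=1}^d\set{U_i>1+tx_i}\Bigr)
=t\sum_{\emptyset\neq I\subseteq\{1,\dots,d\}}(-1)^{\abs I+1}\,{\rm E}\Bigl\{\max_{i\in I}(\abs{x_i}Z_i)\Bigr\}+o(t),
\]
so that division by $t$ and $t\downarrow 0$ produce the inclusion--exclusion sum of $D$-norms as the limit.

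It remains to identify this sum with ${\rm E}\set{\min_{1\le i\le d}(\abs{x_i}Z_i)}$, which is the heart of the argument. Setting $Y_i:=\abs{x_i}Z_i\ge 0$, I would establish the pointwise max--min identity $\sum_{\emptyset\neq I}(-1)^{\abs I+1}\max_{i\in I}Y_i=\min_{1\le i\le d}Y_i$ and then take expectations, the interchange being trivial since the sum is finite. The cleanest route is the layer-cake representation: writing both sides as $\int_0^\infty\mathbf{1}\set{\cdots>s}\,ds$, it suffices to verify the identity for the indicators at each level $s$. With $b_i:=\mathbf{1}\set{Y_i>s}\in\{0,1\}$ one has $\mathbf{1}\set{\max_{i\in I}Y_i>s}=1-\prod_{i\in I}(1-b_i)$ and $\mathbf{1}\set{\min_i Y_i>s}=\prod_{i=1}^d b_i$, whence the claim collapses to the product expansion $\prod_{i=1}^d b_i=\sum_{I\subseteq\{1,\dots,d\}}(-1)^{\abs I}\prod_{i\in I}(1-b_i)$, which is immediate.

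The main obstacle I anticipate is precisely this combinatorial max--min identity together with its correct interaction with the signs of the inclusion--exclusion; the purely analytic part is routine once one notes that the $o(\norm{\bfone-\bfu})$ term in Corollary~\ref{cor:expansion_of_copula} is \emph{uniform} and therefore contributes only $o(t)$ along each of the finitely many families $\bfu^{(I)}$.
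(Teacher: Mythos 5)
Your proposal is correct and follows essentially the same route as the paper's proof: inclusion--exclusion over the events $\set{U_i\le 1+tx_i}$, the uniform expansion of Corollary~\ref{cor:expansion_of_copula} together with homogeneity of the $D$-norm to linearize each term, and the identity $\min_i Y_i=\sum_{\emptyset\neq I}(-1)^{\abs I+1}\max_{i\in I}Y_i$ to resum. The only (cosmetic) difference is that you verify this max--min identity via the layer-cake representation and indicator algebra, whereas the paper simply asserts it can be seen by induction.
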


\begin{proof} First note that we have for arbitrary real numbers $a_1,\dots,a_d$ the equality
\[
\min(a_1,\dots,a_d)=\sum_{\emptyset\not= K\subset\set{1,\dots,d}}(-1)^{\abs{K}-1}\max(a_k:\,k\in K),
\]
which can be seen by induction. Denote by $\bfe_k$ the $k$-th unit vector in the Euclidean
space $\R^d$. The inclusion-exclusion theorem together with Corollary
\ref{cor:expansion_of_copula} then implies for fixed $\bfx\le\bfzero\in\R^d$ and arbitrary
$t>0$
 \begin{align*}
   &{\rm P}(U_1>1+tx_1,\dots, U_d>tx_d)\\
    &=1-{\rm P}\left(\bigcup_{i=1}^d\set{U_i\le 1+tx_i}\right)\\
    &=1- \sum_{\emptyset\not=K\subset\set{1,\dots,d}}(-1)^{\abs{K}-1}{\rm P}(U_k\le 1+t x_k,\,k\in K)\\
    &= 1- \sum_{\emptyset\not=K\subset\set{1,\dots,d}}(-1)^{\abs{K}-1}\left(1-t\norm{\sum_{k\in K}x_k\bfe_k}_D\right)+ o(t)\\
    &= t \sum_{\emptyset\not=K\subset\set{1,\dots,d}}(-1)^{\abs{K}-1} {\rm E}\left(\max_{k\in K}(\abs{x_k} Z_k)\right)+ o(t)\\
    &= t {\rm E}\left(\min_{1\le i\le d}(\abs{x_i}Z_i)\right) + o(t),
 \end{align*}
 which yields the assertion.
\end{proof}

A $d$-dimensional distribution function $Q$ is called
\emph{multivariate GPD} iff its upper tail equals $1+\ln(G)$,
precisely, iff there exists a $d$-di\-men\-sio\-nal EVD $G$ and
$\bfx_0\in\R^d$ with $G(\bfx_0)<1$ such that
\begin{equation}\label{dfn:multivariate_gpd}
Q(\bfx)=1+\ln\{G(\bfx)\},\qquad \bfx\ge \bfx_0.
\end{equation}

Note that contrary to the univariate case,
$H(\bfx)=1+\ln\{G(\bfx)\}$, defined for each $\bfx$ with
$\ln\{G(\bfx)\}\ge -1$, does \emph{not} define a distribution
function unless $d\in\set{1,2}$ (\citet[Theorem 6]{michel08}).

If $G$ has standard negative exponential margins $G_i(x)=\exp(x)$,
$x\le 0$, then $H(\bfx):=1+\ln\{G(\bfx)\}=1-\norm{\bfx}_D$,
defined for all $\bfx\le\bfzero$ with $\norm{\bfx}_D \le 1$, is a
\emph{quasi-copula} (\citet{alsns93}, \citet{genqrs99}). Note that
$H_i(x)=1+x$, $-1\le x\le 0$. We call $H$ a \emph{GP function}.
For each GP function $H$ there exists a distribution function $Q$
with $H(\bfx)=Q(\bfx)=1-\norm{\bfx}_D$, $\bfx\ge\bfx_0$, see
Corollary 2.2 in \citet{aulbf11}. We call $Q$ a
\emph{multivariate} GPD with \emph{ultimately uniform margins}.
Thus we obtain the following consequence.

\begin{cor}\label{cor:characterization_of_domain_of_attraction_for_copula}
A copula $C$ satisfies $C\in\mathcal D(G)$ if, and only if, there exists a GPD $Q$ with
ultimately uniform margins, i.e., the relation
\[
C(\bfu)=Q(\bfu-\bfone)+o(\norm{\bfu-\bfone})
\]
holds uniformly for $\bfu\in[0,1]^d$. In this  case
$Q(\bfx)=1+\ln\{G(\bfx)\}=1-\norm{\bfx}_D$, $\bfx_0\le\bfx\le\bfzero\in\R^d$.
\end{cor}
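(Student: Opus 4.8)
The plan is to connect the expansion of the copula $C$ in the domain of attraction (Corollary \ref{cor:expansion_of_copula}) with the explicit form of a GPD with ultimately uniform margins. By Corollary \ref{cor:expansion_of_copula}, the relation $C\in\mathcal D(G)$ is equivalent to the expansion
\[
C(\bfu)=1-\norm{\bfone-\bfu}_D+o(\norm{\bfone-\bfu})
\]
holding uniformly for $\bfu\in[0,1]^d$. The key observation is that the function $Q(\bfx)=1-\norm{\bfx}_D$, defined for $\bfx\le\bfzero$ with $\norm{\bfx}_D\le1$, is precisely a GPD with ultimately uniform margins, as recalled in the paragraph preceding the statement (invoking Corollary 2.2 of \citet{aulbf11} for the existence of an honest distribution function $Q$ extending this GP function).

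First I would substitute $\bfx=\bfu-\bfone$ into the expansion, so that the condition $\bfu\in[0,1]^d$ becomes $\bfx\le\bfzero$ with $\bfx\ge-\bfone$ in each coordinate. Then $\bfone-\bfu=-\bfx$, and by absolute homogeneity of any norm, $\norm{\bfone-\bfu}_D=\norm{-\bfx}_D=\norm{\bfx}_D$. Hence the expansion in Corollary \ref{cor:expansion_of_copula} reads
\[
C(\bfu)=1-\norm{\bfu-\bfone}_D+o(\norm{\bfu-\bfone})=Q(\bfu-\bfone)+o(\norm{\bfu-\bfone}),
\]
which is exactly the claimed relation. The final identity $Q(\bfx)=1+\ln\{G(\bfx)\}=1-\norm{\bfx}_D$ on the region $\bfx_0\le\bfx\le\bfzero$ follows from the definition of multivariate GPD in \eqref{dfn:multivariate_gpd} together with the representation $G(\bfx)=\exp(-\norm{\bfx}_D)$ for an EVD with standard negative exponential margins, giving $1+\ln\{G(\bfx)\}=1-\norm{\bfx}_D$.

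Since both directions of the equivalence are already packaged inside Corollary \ref{cor:expansion_of_copula}, the proof is essentially a translation of notation rather than a fresh argument. The one point that deserves genuine care — and which I expect to be the main (if modest) obstacle — is the transition from the GP \emph{function} $H(\bfx)=1-\norm{\bfx}_D$ (merely a quasi-copula, not a distribution function in general for $d\ge3$) to an actual GPD $Q$ that agrees with $H$ on the upper tail $\bfx\ge\bfx_0$. This is where the existence statement from Corollary 2.2 of \citet{aulbf11} must be cited explicitly, so that the object appearing in the assertion is a legitimate distribution function and the phrase ``ultimately uniform margins'' is justified via $H_i(x)=1+x$ on $-1\le x\le0$. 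Beyond that, the remaining work is bookkeeping of the $o(\cdot)$ term and noting that the uniformity of the expansion is inherited directly from Corollary \ref{cor:expansion_of_copula}.
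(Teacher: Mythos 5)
Your proposal is correct and follows essentially the same route the paper intends: the corollary is stated as an immediate consequence of Corollary \ref{cor:expansion_of_copula} combined with the existence of a GPD $Q$ agreeing with the GP function $1-\norm{\bfx}_D$ on the upper tail (Corollary 2.2 of \citet{aulbf11}), which is exactly the translation you carry out. Your explicit attention to the quasi-copula versus genuine distribution function issue is the right point to flag, and matches the paper's own discussion preceding the statement.
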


\begin{exam}
\upshape Under suitable conditions, an Archimedean copula $C_A$ is in the domain of
attraction of the EVD $G(\bfx)=\exp\left(-\norm{\bfx}_\vartheta\right)$,
$\bfx\le\bfzero\in\R^d$, where
$\norm{\bfx}_\vartheta=\left(\sum_{i=1}^d\abs{x_i}^\vartheta\right)^{1/\vartheta}$,
$\vartheta\in[1,\infty]$,  is the usual $\vartheta$-norm on $\R^d$, with the convention
$\norm{\bfx}_\infty=\max_{1\le i\le d}\abs{x_i}$; see \citet{chas09} and \citet{larn11}.
In this case it is reasonable to replace $C_A(\bfu)$ for $\bfu$ close to $\bfone$ by
$Q(\bfu-\bfone)=1-\norm{\bfu-\bfone}_\vartheta$.
\end{exam}

The multivariate PT approach in  \citet{aulbf11} is formulated in
terms of random vectors and based on the following result. Its
second part goes back to \citet{buihz08}, Section 2.2, formulated
for the bivariate case and for Pareto margins instead of uniform
ones.

\begin{lemma}\label{lem:characterization_of_multivariate_gpd}
A distribution function $Q$ is a multivariate GPD with \emph{ultimately uniform margins}
\begin{itemize}
\item[$\iff$] there exists a $D$-norm $\norm\cdot_D$ on $\R^d$ such that
    $Q(\bfx)=1-\norm\bfx_D$, $\bfx_0\le \bfx\le\bfzero\in\R^d$,
\item[$\iff$] there exists a generator $\bfZ=(Z_1,\dots,Z_d)$ such that for
    $\bfx_0\le\bfx\le\bfzero\in\R^d$
        \[
        Q(\bfx)={\rm P}\left\{-U\left(\frac 1 {Z_1},\dots,\frac1 {Z_d}\right)\le\bfx\right\},
        \]
        where the univariate random variable $U$ is uniformly distributed on $(0,1)$  and independent
        of $\bfZ$.
\end{itemize}
\end{lemma}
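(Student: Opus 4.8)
The plan is to take the first equivalence as essentially given and to concentrate on the stochastic representation in the third line. Indeed, the equivalence between ``$Q$ is a multivariate GPD with ultimately uniform margins'' and the existence of a $D$-norm with $Q(\bfx)=1-\norm\bfx_D$ on $\bfx_0\le\bfx\le\bfzero$ is precisely the content of the definition recalled just before the lemma together with Corollary 2.2 in \citet{aulbf11}. So it remains to show that the $D$-norm representation is equivalent to the representation via the random vector $-U(1/Z_1,\dots,1/Z_d)$.

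For the direction from the $D$-norm representation to the stochastic one, I would fix a generator $\bfZ=(Z_1,\dots,Z_d)\in[0,c]^d$ of $\norm\cdot_D$ (which exists by the very definition of a $D$-norm) and an independent uniform $U$ on $(0,1)$, and set $\bfY:=-U(1/Z_1,\dots,1/Z_d)$. The key step is the event identity: for $\bfx\le\bfzero\in\R^d$,
\[
\set{\bfY\le\bfx}=\bigcap_{i=1}^d\set{-\frac{U}{Z_i}\le x_i}=\bigcap_{i=1}^d\set{U\ge\abs{x_i}Z_i}=\set{U\ge\max_{1\le i\le d}\abs{x_i}Z_i},
\]
where $x_i\le 0$ turns each coordinate inequality into $U\ge\abs{x_i}Z_i$. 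Conditioning on $\bfZ$ and using the independence of $U$ then gives ${\rm P}(\bfY\le\bfx\mid\bfZ)=1-\max_{1\le i\le d}\abs{x_i}Z_i$, provided the random threshold $\max_{1\le i\le d}\abs{x_i}Z_i$ lies in $[0,1]$ almost surely. Taking expectations yields ${\rm P}(\bfY\le\bfx)=1-{\rm E}\left(\max_{1\le i\le d}\abs{x_i}Z_i\right)=1-\norm\bfx_D=Q(\bfx)$, as desired.

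The one point that needs care, and which pins down the admissible region $\bfx\ge\bfx_0$, is the requirement $\max_{1\le i\le d}\abs{x_i}Z_i\le 1$ almost surely, so that the uniform survival function ${\rm P}(U\ge t)=1-t$ applies without truncation. Since $Z_i\le c$, this holds as soon as $\max_{1\le i\le d}\abs{x_i}\le 1/c$; choosing $\bfx_0$ with $\abs{x_{0,i}}\le 1/c$ for every $i$ (for instance $\bfx_0=-c^{-1}\bfone$) secures it, and for such $\bfx_0$ the identity $Q(\bfx)=1-\norm\bfx_D$ then holds on all of $\bfx_0\le\bfx\le\bfzero$. I expect this to be the only genuine obstacle; everything else is a direct computation.

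For the converse I would simply read the same chain of equalities backwards: given a generator $\bfZ$ and the stochastic representation, the computation above shows that the distribution function of $-U(1/Z_1,\dots,1/Z_d)$ equals $1-{\rm E}\left(\max_{1\le i\le d}\abs{x_i}Z_i\right)$ on $\bfx_0\le\bfx\le\bfzero$, and since $\bfZ\in[0,c]^d$ with ${\rm E}(Z_i)=1$ is a legitimate generator, this right-hand side is exactly $1-\norm\bfx_D$ for the associated $D$-norm. Thus the two representations carry the same information, which completes the equivalence.
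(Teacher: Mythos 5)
Your argument is correct, and in fact the paper itself gives no proof of this lemma at all -- it is quoted from \citet{aulbf11} and \citet{buihz08} -- so there is nothing to compare against line by line; what you write is the standard argument and matches what those references do. The reduction of the first equivalence to the definition of a multivariate GPD plus Corollary 2.2 of \citet{aulbf11} is legitimate, and the computation ${\rm P}\left(-U/Z_i\le x_i,\,1\le i\le d\right)={\rm P}\left\{U\ge\max_{1\le i\le d}(\abs{x_i}Z_i)\right\}=1-{\rm E}\left\{\max_{1\le i\le d}(\abs{x_i}Z_i)\right\}$ on the region where $\max_i\abs{x_i}\le 1/c$ is exactly the right identity, including your observation that the boundedness $\bfZ\in[0,c]^d$ is what pins down the admissible $\bfx_0$. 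The only loose end is the event $\set{Z_i=0}$, where $-U/Z_i$ is undefined; with the convention $-U/0=-\infty$ both sides of your event identity still agree (the coordinate constraint becomes vacuous, as does $U\ge\abs{x_i}\cdot 0$), and the paper disposes of this by the remark immediately following the lemma, replacing $-U/Z_i$ by $\max(M,-U/Z_i)$ for some $M<0$ -- worth one sentence in your write-up, but not a gap in substance.
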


Note that $-U/Z_i$ can be replaced by $\max(M,-U/Z_i)$, $1\le i\le d$, in the preceding
result with some constant $M<0$ to avoid possible division by zero.

In view of the preceding discussion we call a copula $C$ a \emph{GPD-copula} if there
exists $\bfu_0<\bfone\in\R^d$ such that
\[
C(\bfu)=1-\norm{\bfu-\bfone}_D,\qquad \bfu_0\le\bfu\le\bfone\in\R^d,
\]
where $\norm\cdot_D$ is an arbitrary $D$-norm on $\R^d$, i.e., if there  exists a generator
$\bfZ=(Z_1,\dots,Z_d)$ such that for $\bfu_0\le\bfu\le\bfone\in\R^d$
        \[
        C(\bfu)={\rm P}\left\{-U\left(\frac 1 {Z_1},\dots,\frac1 {Z_d}\right)\le\bfu-\bfone\right\},
        \]
        where the random variable $U$ is uniformly distributed on $(0,1)$  and independent of $\bfZ$.

\section{Multivariate Piecing-Together}\label{sec:multivariate_piecing_together}

Let $\bfU=(U_1,\dots,U_d)$ follow an arbitrary copula $C$ and $\bfV=(V_1,\dots,V_d)$
follow a GPD-copula with generator $\bfZ$. We suppose that $\bfU$ and $\bfV$ are
independent.

Choose a threshold $\bfu=(u_1,\dots,u_d)\in(0,1)^d$ and put for $1\le i\le d$
\begin{equation}\label{eqn:definition_of_pt-vector}
Y_i:=U_i\bm{1}(U_i\le u_i) + \{u_i+(1-u_i)V_i\}\bm{1}(U_i>u_i).
\end{equation}
While it was merely shown in \citet{aulbf11} that the random vector
$\bfY=(Y_1,\dots,Y_d)$ actually follows a GPD, the following main result of this section
provides a precise characterization of the corresponding $D$-norm.

\begin{theorem}\label{theo:multivariate_piecing_together}
Suppose that ${\rm P}(\bfU>\bfu)>0$. The random vector $\bfY$ defined through
\eqref{eqn:definition_of_pt-vector} follows a GPD-copula, which coincides with $C$ on
$[\bfzero,\bfu]\in(0,1)^d$ and $D$-norm given by
\[
\norm\bfx_D={\rm E}\left[\max_{1\le j\le d}\left\{\abs{x_j}Z_j\frac{\bm{1}(U_j>u_j)}{1-u_j}\right\}\right],
\]
where $\bfZ$ and $\bfU$ are independent.
\end{theorem}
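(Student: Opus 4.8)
The plan is to compute the distribution function $C_{\bfY}$ of $\bfY$ directly by conditioning on $\bfU$, exploiting the independence of $\bfU$ and $\bfV$, and then to read off the $D$-norm. First I would record the two easy facts. Each margin is uniform: since $V_i$ is uniformly distributed on $(0,1)$ and independent of $U_i$, splitting on $\set{U_i\le u_i}$ and $\set{U_i>u_i}$ gives ${\rm P}(Y_i\le y)=y$. Moreover $\set{Y_i>u_i}=\set{U_i>u_i}$ almost surely (because $V_i>0$), so for $\bfy\le\bfu$ the event $\set{Y_i\le y_i}$ forces $U_i\le u_i$, whence $Y_i=U_i$ and $C_{\bfY}(\bfy)={\rm P}(\bfU\le\bfy)=C(\bfy)$. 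This yields the asserted coincidence with $C$ on $[\bfzero,\bfu]$.

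The heart of the argument is the upper tail. Fix $\bfy$ with $\bfu\le\bfy\le\bfone$ and set $w_i:=(y_i-u_i)/(1-u_i)$, so that $1-w_i=(1-y_i)/(1-u_i)$. Conditioning on $\bfU$ and writing $K=K(\bfU):=\set{i:U_i>u_i}$, I would use that on the non-exceedance coordinates $\set{Y_i\le y_i}$ holds automatically while on the exceedance coordinates $\set{Y_i\le y_i}=\set{V_i\le w_i}$. Independence of $\bfV$ from $\bfU$ then gives
\[
{\rm P}(\bfY\le\bfy\mid\bfU)={\rm P}(V_i\le w_i,\ i\in K),
\]
the $K$-marginal of the GPD-copula of $\bfV$ evaluated at $(w_i)_{i\in K}$.

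Next I would invoke that a marginal of a GPD-copula with generator $\bfZ$ is again a GPD-copula, generated by the corresponding subvector $(Z_i)_{i\in K}$; this follows by setting the remaining coordinates equal to $1$ in $C_{\bfV}(\bfv)=1-\norm{\bfv-\bfone}_D$. Choosing a threshold $\bfu_0$ close enough to $\bfone$ so that every $w_i$ simultaneously exceeds the finitely many marginal thresholds, the marginal expansion becomes an \emph{exact} identity: for $K\neq\emptyset$,
\[
{\rm P}(V_i\le w_i,\ i\in K)=1-{\rm E}_{\bfZ}\left[\max_{i\in K}\left((1-w_i)Z_i\right)\right],
\]
while the value is $1$ for $K=\emptyset$. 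Substituting $1-w_i=\abs{y_i-1}/(1-u_i)$, taking expectation over $\bfU$, and rewriting $\max_{i\in K}$ as $\max_{1\le j\le d}$ with the factor $\bm{1}(U_j>u_j)$ (which simultaneously absorbs the case $K=\emptyset$), the independence of $\bfZ$ and $\bfU$ yields
\[
C_{\bfY}(\bfy)=1-{\rm E}\left[\max_{1\le j\le d}\left(\abs{y_j-1}Z_j\frac{\bm{1}(U_j>u_j)}{1-u_j}\right)\right]=1-\norm{\bfy-\bfone}_D,
\]
which is the claimed representation. Finally I would check that this is a bona fide $D$-norm by exhibiting $W_j:=Z_j\bm{1}(U_j>u_j)/(1-u_j)$ as its generator: it is bounded, and by independence and uniformity of $U_j$ one has ${\rm E}(W_j)={\rm E}(Z_j)\,{\rm P}(U_j>u_j)/(1-u_j)=1$.

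The step I expect to be the main obstacle is the passage to the exact marginal expansions uniformly over all $2^d$ subsets $K$: one must argue that a single threshold $\bfu_0<\bfone$ can be chosen so that the tail identity for $C_{\bfV}$ holds on every relevant marginal at once, and then correctly reassemble the subset-indexed maxima $\max_{i\in K}(\cdot)$ into a single maximum $\max_{1\le j\le d}(\cdot)$ carrying the indicators $\bm{1}(U_j>u_j)$, taking care that the degenerate case $K=\emptyset$ contributes the value $1$ consistently. The standing assumption ${\rm P}(\bfU>\bfu)>0$ enters to guarantee that the resulting generator, and hence the GPD-copula, is non-degenerate.
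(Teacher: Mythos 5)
Your proposal is correct and follows essentially the same route as the paper: both verify the uniform margins, show $C_{\bfY}=C$ on $[\bfzero,\bfu]$ by noting only non-exceedance coordinates can contribute there, and compute the upper tail by partitioning according to the exceedance set (your conditioning on $\bfU$ is the paper's explicit sum over the $2^d$ subsets $K$), then apply the exact GPD-copula identity to the relevant marginal of $\bfV$ for $\bfy$ close enough to $\bfone$ and reassemble the subset-wise maxima into the single thinned-generator maximum. Your closing verification that $Z_j\bm{1}(U_j>u_j)/(1-u_j)$ is a bona fide generator appears in the paper as a remark following the theorem rather than inside the proof.
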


Note that $\widetilde \bfZ:=(\widetilde Z_1,\dots,\widetilde Z_d)$ with $\widetilde Z_j:= Z_j
\bm{1}(U_j>u_j)/(1-u_j)$,  is a generator with the characteristic properties of being
nonnegative, bounded and satisfying ${\rm E}(\widetilde Z_j)=1$, $1\le j\le d$, due to the
independence of $\bfZ$ and $\bfU$. In analogy to a corresponding terminology in point
process theory one might call $\widetilde\bfZ$ a \emph{thinned} generator.

\begin{proof}
Elementary computations yield
\[
{\rm P}(Y_i\le x)=x,\qquad 0\le x\le 1,
\]
i.e., $\bfY$ follows a copula. We have, moreover, for $\bfzero\le\bfx\le \bfu$
\begin{align*}
&{\rm P}(\bfY\le\bfx)\\
&=\sum_{K\subset\set{1,\dots,d}} {\rm P}\left(\bfY\le\bfx;\,U_k\le u_k,k\in K;\,U_j>u_j,j\in K^\complement\right)\\
&=\sum_{K\subset\set{1,\dots,d}} {\rm P}\Big[U_i\bm{1}(U_i\le u_i)+\{u_i+(1-u_i)V_i\}\bm{1}(U_i>u_i)\le x_i,1\le i\le d;\\
&\hspace*{5cm} U_k\le u_k,k\in K;\,U_j>u_j,j\in K^\complement \Big]\\
&={\rm P}(U_i\le x_i,\,1\le i\le d)\\
&=C(\bfx)
\end{align*}
and for $\bfu<\bfx\le\bfone$
\begin{align*}
&{\rm P}(\bfY\le\bfx)\\
&=\sum_{K\subset\set{1,\dots,d}} {\rm P}\left(\bfY\le\bfx;\,U_k\le u_k,k\in K;\,U_j>u_j,j\in K^\complement\right)\\
&=\sum_{K\subset\set{1,\dots,d}} {\rm P}\left(U_k\le u_k,k\in K;\,u_j+(1-u_j)V_j\le x_j,U_j>u_j,j\in K^\complement\right)\\
&=\sum_{K\subset\set{1,\dots,d}} {\rm P}\left(U_k\le u_k,k\in K; U_j> u_j,j\in K^\complement\right){\rm P}\left(V_j\le\frac{x_j-u_j}{1-u_j}, j\in K^\complement\right)\\
&=\sum_{K\subset\set{1,\dots,d}} {\rm E}\left[\left\{\prod_{k\in K} \bm{1}(U_k\le u_k)\right\} \left\{\prod_{j\in K^\complement} \bm{1}(U_j > u_j)\right\}\right] \\
&\hspace*{5cm} \times {\rm P}\left(V_j\le\frac{x_j-u_j}{1-u_j}, j\in K^\complement\right).
\end{align*}

If $\bfx<\bfone$  is large enough, then we have for $K^\complement\not=\emptyset$
\begin{align*}
{\rm P}\left(V_j\le\frac{x_j-u_j}{1-u_j}, j\in K^\complement\right)&= 1-{\rm E}\left\{\max_{j\in K^\complement}\left(\abs{\frac{x_j-u_j}{1-u_j}-1}Z_j\right)\right\}\\
&=1- {\rm E}\left\{\max_{j\in K^\complement}\left(\frac{\abs{x_j-1}}{1-u_j}Z_j\right)\right\}
\end{align*}
and, thus,
\begin{align*}
&{\rm P}(\bfY\le\bfx)\\
&= {\rm P}(U_k\le u_k,1\le k\le d)\\
&\quad + \sum_{K\subset\set{1,\dots,d}\atop K^\complement\not=\emptyset} {\rm E}\left[\left\{\prod_{k\in K} \bm{1}(U_k\le u_k)\right\} \left\{\prod_{j\in K^\complement} \bm{1}(U_j > u_j)\right\}\right]\\
&\hspace*{5cm}\times \left[1- {\rm E}\left\{\max_{j\in K^\complement}\left(\frac{\abs{x_j-1}}{1-u_j}Z_j\right)\right\}\right]\\
&=1-\sum_{K\subset\set{1,\dots,d}\atop K^\complement\not=\emptyset}  {\rm E}\left[\left\{\prod_{k\in K} \bm{1}(U_k\le u_k)\right\} \left\{\prod_{j\in K^\complement} \bm{1}(U_j > u_j)\right\} \max_{j\in K^\complement}\left(\frac{\abs{x_j-1}}{1-u_j}Z_j\right)\right]\\
&=1- {\rm E}\left[\sum_{K\subset\set{1,\dots,d}\atop K^\complement\not=\emptyset} \left\{\prod_{k\in K} \bm{1}(U_k\le u_k)\right\} \left\{\prod_{j\in K^\complement} \bm{1}(U_j > u_j)\right\} \max_{j\in K^\complement}\left(\frac{\abs{x_j-1}}{1-u_j}Z_j\right)\right]\\
&=1-{\rm E}\left[\max_{1\le j\le d}\left\{\abs{x_j-1}Z_j\frac{\bm{1}(U_j>u_j)}{1-u_j}\right\}\right]\\
&=1-\norm{\bfx-\bfone}_D,
\end{align*}
as we can suppose independence of $\bfU$ and the generator $\bfZ$.
\end{proof}

The following result  justifies the use of the multivariate
PT-approach as it shows that the PT vector $\bfY$, suitably
standardized, approximately follows the distribution of $\bfU$
close to one.

\begin{prop}\label{prop:justification_of_multivariate_pt-approach}
Suppose that $\bfU=(U_1,\dots,U_d)$ follows a copula $C\in\mathcal D(G)$ with
corresponding $D$-norm $\norm\cdot_D$ generated by $\bfZ$. If the random vector $\bfV$
in the definition \eqref{eqn:definition_of_pt-vector} of the PT vector $\bfY$ has this
generator $\bfZ$ as well, then we have
\[
{\rm P}(\bfU>\bfv)={\rm P}\{Y_j>u_j + v_j(1-u_j),\,1\le j\le d\mid \bfU>\bfu\} + o(\norm{\bfone-\bfv})
\]
uniformly for $\bfv\in[\bfu,\bfone]\subset\R^d$.
\end{prop}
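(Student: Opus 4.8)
The plan is to first strip away the conditioning on the right-hand side by pinning down the event $\{Y_j > u_j + v_j(1-u_j),\,1\le j\le d\}$ exactly. Since $\bfv\in[\bfu,\bfone]$ with $\bfu\in(0,1)^d$, every threshold satisfies $u_j+v_j(1-u_j)>u_j$. Hence for each coordinate $Y_j$ can exceed its threshold only when $U_j>u_j$, in which case $Y_j=u_j+(1-u_j)V_j$ by \eqref{eqn:definition_of_pt-vector}, and the inequality $Y_j>u_j+v_j(1-u_j)$ is then equivalent to $V_j>v_j$. Thus, coordinate by coordinate,
\[
\{Y_j>u_j+v_j(1-u_j)\}=\{U_j>u_j\}\cap\{V_j>v_j\},
\]
so the joint event equals $\{\bfU>\bfu\}\cap\{\bfV>\bfv\}$. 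Using the independence of $\bfU$ and $\bfV$ and dividing by ${\rm P}(\bfU>\bfu)>0$, the conditional probability on the right collapses to ${\rm P}(\bfV>\bfv)$, and the assertion reduces to showing
\[
{\rm P}(\bfU>\bfv)={\rm P}(\bfV>\bfv)+o(\norm{\bfone-\bfv})
\]
uniformly for $\bfv\in[\bfu,\bfone]$.

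Next I would expand both survival probabilities by inclusion--exclusion, exactly as in the proof of Corollary \ref{cor:tail_copula}. Because $\bfV$ has a GPD-copula with generator $\bfZ$, the relation $C_{\bfV}(\bfw)=1-\norm{\bfw-\bfone}_D$ is \emph{exact} in a neighborhood of $\bfone$; running inclusion--exclusion over the marginal copulas and invoking the min--max identity used in Corollary \ref{cor:tail_copula} therefore gives, with no remainder,
\[
{\rm P}(\bfV>\bfv)={\rm E}\left\{\min_{1\le i\le d}\left((1-v_i)Z_i\right)\right\}=\lambda(\bfv-\bfone)
\]
for all $\bfv$ close enough to $\bfone$. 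For $\bfU$, whose copula $C$ lies in $\mathcal D(G)$ with the same generator $\bfZ$, the identical computation—but now feeding the \emph{uniform} expansion $C(\bfw)=1-\norm{\bfone-\bfw}_D+o(\norm{\bfone-\bfw})$ of Corollary \ref{cor:expansion_of_copula} into each of the finitely many marginal probabilities ${\rm P}(U_k\le v_k,\,k\in K)$—yields
\[
{\rm P}(\bfU>\bfv)=\lambda(\bfv-\bfone)+o(\norm{\bfone-\bfv}).
\]
Subtracting the two displays delivers the reduced claim, since the leading terms $\lambda(\bfv-\bfone)$ cancel.

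The step demanding the most care is the uniformity of the remainder. Corollary \ref{cor:tail_copula} is stated only pointwise (fixed $\bfx$, $t\downarrow 0$), so I cannot quote it verbatim; instead I must re-derive the tail expansion for $\bfU$ while carrying the error along. The essential input is that the expansion in Corollary \ref{cor:expansion_of_copula} is uniform on $[0,1]^d$, a property that passes to every lower-dimensional margin $C_K$: the remainder there is $o(\norm{\bfone_K-\bfv_K})$, and since $\norm{\bfone_K-\bfv_K}\le\norm{\bfone-\bfv}$ for each subvector, it is in particular $o(\norm{\bfone-\bfv})$. As there are only $2^d$ subsets $K$, summing finitely many uniform remainders preserves uniformity, while the leading terms assemble into $\lambda(\bfv-\bfone)$ by the same min--max identity. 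Finally, the exact GPD expansion for $\bfV$ is valid in a neighborhood of $\bfone$, which is precisely the regime the estimate $o(\norm{\bfone-\bfv})$ probes; for $\bfv$ bounded away from $\bfone$ the bound is automatic, so restricting to $[\bfu,\bfone]$ poses no obstacle.
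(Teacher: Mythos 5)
Your proposal is correct and follows essentially the same route as the paper: reduce the conditional probability to ${\rm P}(\bfV>\bfv)$ via the structure of $Y_j$ and independence, identify this exactly with ${\rm E}\{\min_{1\le j\le d}((1-v_j)Z_j)\}$ near $\bfone$, and expand ${\rm P}(\bfU>\bfv)$ by rerunning the tail-copula argument with the uniform remainder from Corollary \ref{cor:expansion_of_copula}. The only cosmetic difference is that you obtain the exact identity for $\bfV$ by inclusion--exclusion on the GPD-copula, whereas the paper reads it off directly from the representation $V_j=1-U/Z_j$ of Lemma \ref{lem:characterization_of_multivariate_gpd}.
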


The term $o(\norm{\bfone-\bfv})$ can be dropped in the preceding result if $C$ is a GPD-copula itself, precisely, if $C(\bfx)=1-\norm{\bfx}_D$, $\bfx\ge \bfu$.

\begin{proof}
Repeating the arguments in the proof of Corollary \ref{cor:tail_copula} we obtain
 \[
 {\rm P}(\bfU>\bfv)={\rm E}\left[\min_{1\le j\le d}\{(1-v_j)Z_j\}\right]+ o(\norm{\bfone-\bfv})
 \]
 uniformly for $\bfv\in[0,1]^d$.

 We have, on the other hand, for $\bfv$ close enough to $\bfone$
 \begin{align*}
 &{\rm P}\{Y_j>u_j+v_j(1-u_j),\,1\le j\le d\mid \bfU > \bfu\}\\
 &\qquad={\rm P}\{U<(1-v_j) Z_j,\,1\le j\le d\}
 ={\rm E}\left[\min_{1\le j\le d}\{(1-v_j)Z_j\}\right],
 \end{align*}
 which completes the proof.
 \end{proof}

If the copula $C$ is not known, the preceding PT-approach can be modified as follows, with
$C$ replaced by the \emph{empirical copula}. Suppose we are given $n$ copies
$\bfX_1,\dots,\bfX_n$ of a random vector $\bfX=(X^{(1)},\dots,X^{(d)})$. Set for $1\le
j\le d$
\[
F_{n}^{(j)}(x):=\frac 1{n+1}\sum_{i=1}^n \bm{1}(X_i^{(j)}\le x),\qquad x\in\R,
\]
which is essentially the empirical distribution function of the $j$-th components of
$\bfX_1,\dots,\bfX_n$. Transform each random vector $\bfX_i$ in the sample to the vector
of its standardized ranks $\bfR_i:=\left(F_n(X_i^{(1)}),\dots,F_n(X_i^{(d)})\right)$. The
empirical copula is then the empirical distribution function corresponding to
$\bfR_1,\dots,\bfR_n$:
\[
C_n(\bfu)=\frac 1 n\sum_{i=1}^n \bm{1}(\bfR_i\le \bfu),\qquad \bfu\in[0,1]^d.
\]
Properties of the empirical copula are well studied; we refer to \citet{segers2011} and the
literature cited therein.

Given the empirical copula $C_n$, let the random vector $\bfU^*=(U_1^*,\dots,U_d^*)$
follow this distribution function $C_n$ and let $\bfV=(V_1,\dots,V_d)$ follow a GPD-copula.
Again we suppose that $\bfU$ and $\bfV$ are independent.

Choose a threshold $\bfu=(u_1,\dots,u_d)\in(0,1)^d$ and put for $1\le i\le d$
\begin{equation}\label{eqn:piecing_together_with_empirical_copula}
Y_i^*:=U_i^*\bm{1}(U_i^*\le u_i)+ \{u_i^*+(1-u_i^*)V_i\} \bm{1}(U_i^*>u_i),
\end{equation}
where $u_i^*:={\rm P}_n(U_i^*\le u_i)$. Recall that the preceding probability is, actually,
a conditional one, given the empirical copula $C_n$. To avoid confusion we add the index
$n$. The following result can be shown by repeating the arguments in the proof of Theorem
\ref{theo:multivariate_piecing_together}. The minimum $\min(\bfu,\bfu^*)$ is meant to be
taken componentwise.

\begin{prop}
Suppose that the threshold $\bfu\in(0,1)^d$ satisfies ${\rm P}_n(\bfU^*>\bfu)>0$. The
random vector $\bfY^*$, defined componentwise in
\eqref{eqn:piecing_together_with_empirical_copula}, follows a multivariate GPD, which
coincides on $[\bfzero,\min(\bfu,\bfu^*)]$ with the empirical copula $C_n$ and, for
$\bfx<\bfone$ large enough,
\[
{\rm P}_n(\bfY^*\le\bfx)=1-\norm{\bfx}_{D_n},
\]
where the $D$-norm is given by
\[
\norm{\bfx}_{D_n}={\rm E}_n\left[\max_{1\le j\le d}\left\{\abs{x_j}Z_j\frac{\bm{1}(U_j^*>u_j)}{1-u_j^*}\right\}\right],
\]
the generator $\bfZ$ and $\bfU^*$ being independent and ${\rm E}_n$ denoting the
expected value with respect to ${\rm P}_n$.
\end{prop}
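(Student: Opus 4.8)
The plan is to reproduce verbatim the three-stage argument of Theorem \ref{theo:multivariate_piecing_together}, carrying it out \emph{conditionally on the observed sample}, so that every probability and expectation is read as ${\rm P}_n$ and ${\rm E}_n$. Conditionally on the data the empirical copula $C_n$ is a fixed (discrete) distribution function, $\bfU^*$ is a random vector with law $C_n$, and $\bfV$ is independent of $\bfU^*$ with a GPD-copula governed by the generator $\bfZ$; hence the structural independence that drives the proof of Theorem \ref{theo:multivariate_piecing_together} is available here as well. The only genuinely new feature is that the empirical margins are discrete, which forces the two thresholds $u_i$ and $u_i^*={\rm P}_n(U_i^*\le u_i)$ to differ and accounts for every deviation from the continuous case.

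First I would compute the one-dimensional margins of $\bfY^*$. Splitting on $\set{U_i^*\le u_i}$ versus $\set{U_i^*>u_i}$ and using ${\rm P}_n(U_i^*>u_i)=1-u_i^*$ together with the ultimately uniform margin of $V_i$, one finds that for $x$ close to $1$ the lower piece contributes $u_i^*$ and the upper piece contributes $(1-u_i^*)\cdot\frac{x-u_i^*}{1-u_i^*}=x-u_i^*$, which add to $x$; thus ${\rm P}_n(Y_i^*\le x)=x$ in the tail. Below the threshold the margin instead reproduces the discrete empirical margin, so $\bfY^*$ is a multivariate GPD with ultimately uniform, but not globally uniform, margins. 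This is precisely why the statement reads ``multivariate GPD'' rather than ``GPD-copula''.

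Next I would treat the central box. On the event $\set{U_j^*>u_j}$ we have $Y_j^*=u_j^*+(1-u_j^*)V_j\ge u_j^*$, while on $\set{U_k^*\le u_k}$ we have $Y_k^*=U_k^*$. Hence for $\bfzero\le\bfx\le\min(\bfu,\bfu^*)$ every term of the inclusion--exclusion decomposition with nonempty exceedance set $K^\complement$ vanishes, since it would require $Y_j^*\le x_j<u_j^*$, leaving only the all-below term ${\rm P}_n(U_i^*\le x_i,\,1\le i\le d)=C_n(\bfx)$. The truncation at $\min(\bfu,\bfu^*)$, rather than at $\bfu$ as in Theorem \ref{theo:multivariate_piecing_together}, is exactly the jump effect of the discrete margins.

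Finally, for $\bfx<\bfone$ large enough I would expand ${\rm P}_n(\bfY^*\le\bfx)$ over the subsets $K$, factor the $\bfV$-part out by independence, and insert the GPD-copula form of $\bfV$, which for $j\in K^\complement$ yields $1-{\rm E}\set{\max_{j\in K^\complement}\left(\abs{\frac{x_j-u_j^*}{1-u_j^*}-1}Z_j\right)}$. The key algebraic simplification $\frac{x_j-u_j^*}{1-u_j^*}-1=\frac{x_j-1}{1-u_j^*}$ converts the factor into $\frac{\abs{x_j-1}}{1-u_j^*}$, after which the min--max identity used in Corollary \ref{cor:tail_copula} collapses the alternating sum into ${\rm E}_n\left[\max_{1\le j\le d}\set{\abs{x_j-1}Z_j\frac{\bm{1}(U_j^*>u_j)}{1-u_j^*}}\right]=\norm{\bfx-\bfone}_{D_n}$, i.e.\ exactly the form of Theorem \ref{theo:multivariate_piecing_together}. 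The point deserving the most care is verifying that the thinned generator $\widetilde Z_j:=Z_j\frac{\bm{1}(U_j^*>u_j)}{1-u_j^*}$ is again legitimate, i.e.\ that ${\rm E}_n(\widetilde Z_j)=1$; this holds precisely because $u_j^*={\rm P}_n(U_j^*\le u_j)$ makes ${\rm P}_n(U_j^*>u_j)=1-u_j^*$ cancel the normalising denominator, which is the very role the data-dependent quantity $u_j^*$ was introduced to play.
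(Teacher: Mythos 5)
Your proposal is correct and follows the paper's intended route: the paper offers no separate proof, stating only that the result follows by repeating the arguments of Theorem \ref{theo:multivariate_piecing_together}, and you carry out precisely that adaptation, correctly isolating the two genuinely new points --- the truncation of the central box at $\min(\bfu,\bfu^*)$ forced by the discrete empirical margins (so that $\bfY^*$ is a multivariate GPD with ultimately uniform margins rather than a GPD-copula), and the normalisation by $1-u_j^*={\rm P}_n(U_j^*>u_j)$, which is exactly what makes the thinned generator satisfy ${\rm E}_n(\widetilde Z_j)=1$. One small correction to your description of the final step: the sum over subsets $K$ in the upper-tail computation is not an alternating sum and is not collapsed by the min--max identity of Corollary \ref{cor:tail_copula}; rather, the events $\set{U_k^*\le u_k,\,k\in K;\,U_j^*>u_j,\,j\in K^\complement}$ partition the sample space, and on each such event $\max_{j\in K^\complement}\bigl(\frac{\abs{x_j-1}}{1-u_j^*}Z_j\bigr)$ coincides with $\max_{1\le j\le d}\bigl\{\abs{x_j-1}Z_j\frac{\bm{1}(U_j^*>u_j)}{1-u_j^*}\bigr\}$, so the independence of $\bfZ$ and $\bfU^*$ lets the whole sum be rewritten as the single expectation $\norm{\bfx-\bfone}_{D_n}$, exactly as in the proof of Theorem \ref{theo:multivariate_piecing_together}. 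This misattribution does not affect the validity of your argument, since the decomposition you set up and the endpoint you reach are the correct ones.
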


Proposition \ref{prop:justification_of_multivariate_pt-approach} can now be formulated as follows; its proof carries over.

\begin{prop}
Let $C$ be a copula with $C\in\mathcal{D}(G)$ and corresponding $D$-norm
$\norm\cdot_D$ generated by $\bfZ$. Let the random vector $\bfU$ follow this copula $C$.
Suppose that the random vector $\bfV$ in the definition
\eqref{eqn:piecing_together_with_empirical_copula} of the  PT random vector $\bfY^*$ has
this generator $\bfZ$ as well. Then we have
\[
{\rm P}(\bfU>\bfv)={\rm P}_n\{Y_j^*>u_j^*+v_j(1-u_j^*),\,1\le j\le d\mid
\bfU^*>\bfu\} + o(\norm{\bfone-\bfv})
\]
uniformly for $\bfv\in[\bfu,\bfone]\in\R^d$, where $\bfU^*$
follows the empirical copula $C_n$.
\end{prop}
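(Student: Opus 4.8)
The plan is to follow the proof of Proposition~\ref{prop:justification_of_multivariate_pt-approach} almost verbatim, replacing the measure ${\rm P}$ by the conditional measure ${\rm P}_n$ given the empirical copula $C_n$, and exploiting throughout that the auxiliary vector $\bfV$ is independent of the sample. First I would dispose of the left-hand side, which lives purely at the population level: since $\bfU$ follows the true copula $C\in\mathcal D(G)$ with generator $\bfZ$, a literal repetition of the computation in the proof of Corollary~\ref{cor:tail_copula} yields
\[
{\rm P}(\bfU>\bfv)={\rm E}\left[\min_{1\le j\le d}\{(1-v_j)Z_j\}\right]+o(\norm{\bfone-\bfv})
\]
uniformly for $\bfv\in[\bfu,\bfone]$. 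This term involves $C$ only and is untouched by the passage to the empirical copula, so the task reduces to evaluating the conditional probability on the right.

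Next I would compute that conditional probability \emph{exactly} under ${\rm P}_n$. On the event $\{\bfU^*>\bfu\}$ the definition \eqref{eqn:piecing_together_with_empirical_copula} collapses to $Y_j^*=u_j^*+(1-u_j^*)V_j$ for every $j$, so — provided $u_j^*<1$, which is guaranteed once the threshold is admissible, i.e.\ ${\rm P}_n(\bfU^*>\bfu)>0$ — the event $\{Y_j^*>u_j^*+v_j(1-u_j^*)\}$ is identical to $\{V_j>v_j\}$. The decisive observation is that $\bfV$ is independent of $\bfU^*$ and hence of $C_n$; therefore conditioning on $\{\bfU^*>\bfu\}$ leaves the law of $\bfV$ intact, and
\[
{\rm P}_n\{Y_j^*>u_j^*+v_j(1-u_j^*),\,1\le j\le d\mid\bfU^*>\bfu\}={\rm P}(V_j>v_j,\,1\le j\le d).
\]

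To finish, I would invoke the GPD-copula representation of $\bfV$ from Lemma~\ref{lem:characterization_of_multivariate_gpd}: writing $V_j=1-U/Z_j$ with $U$ uniform on $(0,1)$ and independent of the \emph{true} generator $\bfZ$, the event $\{V_j>v_j\}$ becomes $\{U<(1-v_j)Z_j\}$, and integrating first over $U$ and then over $\bfZ$ gives, for $\bfv$ close enough to $\bfone$,
\[
{\rm P}(V_j>v_j,\,1\le j\le d)={\rm E}\left[\min_{1\le j\le d}\{(1-v_j)Z_j\}\right],
\]
which is precisely the leading term found above for ${\rm P}(\bfU>\bfv)$; comparison of the two displays closes the argument.

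The main obstacle is not analytic but one of bookkeeping: the two independent sources of randomness must be kept cleanly separated. The random thresholds $u_j^*={\rm P}_n(U_i^*\le u_i)$ and the discreteness of $\bfU^*$ live entirely inside ${\rm P}_n$, whereas $\bfV$ carries the \emph{true}, deterministic generator $\bfZ$ and is independent of the data. The single point requiring care is to verify that this independence lets the conditional GPD computation reproduce the population tail coefficient ${\rm E}[\min_{1\le j\le d}\{(1-v_j)Z_j\}]$ unchanged; once that is granted, the empirical quantities $u_j^*$ cancel in the identity $\{Y_j^*>u_j^*+v_j(1-u_j^*)\}=\{V_j>v_j\}$ and the proof is identical to the non-empirical case, so the claim that ``its proof carries over'' is justified.
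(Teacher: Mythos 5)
Your argument is correct and is essentially the paper's own proof: the paper simply states that the proof of Proposition~\ref{prop:justification_of_multivariate_pt-approach} carries over, and what you have written is exactly that carry-over — the population-level expansion of ${\rm P}(\bfU>\bfv)$ via Corollary~\ref{cor:tail_copula}, and the exact evaluation of the conditional probability under ${\rm P}_n$ using the reduction $\{Y_j^*>u_j^*+v_j(1-u_j^*)\}=\{V_j>v_j\}$ on $\{\bfU^*>\bfu\}$ together with the representation $V_j=1-U/Z_j$. Your explicit bookkeeping of the independence of $\bfV$ from the sample (hence from $C_n$ and $\bfU^*$) is precisely the point that makes the carry-over legitimate, so nothing is missing.
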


The term $o(\norm{\bfone-\bfv})$ can again be dropped in the
preceding result if $C$  is a GPD-copula itself, precisely, if
$C(\bfx)=1-\norm{\bfx-\bfone}_D$, $\bfx\ge \bfu$.

\section{Piecing Together: A Functional Version}\label{sec:pt_a_functional_version}

In this section we will extend the PT approach from Section
\ref{sec:multivariate_piecing_together}  to function spaces. Suppose we are given a
stochastic process $\bfX=(X_t)_{t\in[0,1]}$ with corresponding continuous copula process
$\bfU=(U_t)_{t\in[0,1]}\in C[0,1]$, where $C[0,1]$ denotes the space of continuous
functions on $[0,1]$. A \emph{copula process} $\bfU$ is characterized by the condition
that each $U_t$ is uniformly distributed on $(0,1)$. For a review of the attempts to extend
the use of copulas to a dynamic setting, we refer to \citet{ng2010} and the review paper
by Andrew Patton in this Special Issue.

Choose a \emph{generator process} $\bfZ=(Z_t)_{t\in[0,1]}$, characterized by the condition
\[
0\le Z_t\le c,\quad {\rm E}(Z_t)=1,\qquad 0\le t\le 1,
\]
for some $c\ge 1$. We require that $\bfZ\in C[0,1]$ as well.

Let $U$ be a uniformly distributed on $(0,1)$  random variable
that is independent of $\bfZ$ and put for some $M<0$
\begin{equation} \label{eqn:definition_of_standard_GPP}
V_t:=\max\left(M,-\frac U{Z_t}\right),\qquad 0\le t\le 1.
\end{equation}
The process $\bfV=(V_t)_{t\in[0,1]}\in C[0,1]$ is called a
\emph{standard generalized Pareto process} (GPP)  as it has
ultimately uniform margins, see below. This functional extension
of multivariate GPD goes back to \citet{buihz08}, Section 2.3,
again with Pareto margins instead of uniform ones. We incorporate
the constant $M$ again to avoid possible division by zero.

Note that for $0\ge x\ge K:=\max(M,-1/c)$
\begin{align}\label{eqn:marginal_distribution_of_standard_gpp}
{\rm P}(V_t\le x)
&={\rm P}(U\ge \abs x Z_t)
=\int_0^c {\rm P}(U\ge \abs x z)\,({\rm P}*Z_t)(dz)
=1+x,
\end{align}
i.e., each $V_t$ follows close to zero a uniform distribution.

Denote by $E[0,1]$ the set of bounded functions $f:[0,1]\to\R$,
which have only a  finite number of discontinuities, and put $\bar
E^-[0,1]:=\{f\in E[0,1]:\,f\le 0\}$. Repeating the arguments in
the derivation of equation
\eqref{eqn:marginal_distribution_of_standard_gpp}, we obtain for
$f\in \bar E^-[0,1]$ with $\norm f_\infty\le \abs K$
\[
{\rm P}(\bfV\le f)={\rm P}\{V_t\le f(t),\,t\in[0,1]\}
=1- {\rm E}\left\{\sup_{t\in[0,1]}(\abs{f(t)}Z_t)\right\}.
\]
To improve the readability of this paper, we set stochastic processes such as $\bfV$ in bold font and non stochastic functions such as $f$ in default font. Operations on functions such as $\le$, $>$ etc. are meant componentwise.

The process $\bfV$ can easily be modified to obtain a \emph{generalized Pareto copula
process} (GPCP) $\bfQ=(Q_t)_{t\in[0,1]}$, i.e., each $Q_t$ follows the uniform distribution
on $(0,1)$ and $(Q_t-1)_{t\in[0,1]}$ is a GPP. Just put
\[
\widetilde V_t:=\begin{cases}
V_t&\mbox{if }V_t>K\\
\xi&\mbox{if }V_t\le K
\end{cases},\qquad 0\le t\le 1,
\]
where the random variable $\xi$ is uniformly distributed on
$(-1,K)$ and independent of the process $\bfV$; we assume that
$K>-1$. Note that each $\widetilde V_t$ is uniformly distributed
on $(-1,0)$
 and that for $f\in\bar E^-[0,1]$ with $\norm f_\infty<
\abs K$
\begin{align}
{\rm P}\left(\widetilde\bfV\le f\right)&={\rm P}\left\{\widetilde V_t\le f(t),\,0\le t\le 1\right\} \nonumber \\
&={\rm P}\{V_t\le f(t),\,0\le t\le 1\} \label{eqn:distribution_of_GPCP_upper_tail}
={\rm P}(\bfV\le f).
\end{align}
The process $\bfQ$ is now obtained by putting $\bfQ:=(\widetilde V_t+1)_{t\in[0,1]}$. It
does not have continuous sample paths, but it is continuous in probability, i.e.,
\[
{\rm P}\left(\abs{Q_{t_n}- Q_t}>\varepsilon\right)\to_{t_n\to t} 0
\]
for each $t\in[0,1]$ and any $\varepsilon>0$.

Suppose that we are given a copula process $\bfU\in C[0,1]$. Choose a GPCP $\bfQ$ with
generator $\bfZ\in C[0,1]$, $\bfQ$ independent of $\bfU$, a threshold $u\in (0,1)$ and put
\begin{equation}\label{eqn:functional_pt_version}
 Y_t:= U_t\bm{1}(U_t\le u)+\{u+(1-u)Q_t\} \bm{1}(U_t>u),\quad t\in [0,1].
\end{equation}
We call $\bfY=(Y_t)_{t\in[0,1]}$ a \emph{PT-process}. We require that the processes
$\bfU$ and $\bfZ$ are independent. Note that $\bfY$ is continuous under the condition
$\bfU>u$. The following theorem is the main result in this section.

\begin{theorem}\label{theo:pt_processes}
The process $\bfY=(Y_t)_{t\in[0,1]}$ with $Y_t$ as in
\eqref{eqn:functional_pt_version} is a GPCP, which is continuous
in probability, and with $D$-norm given by
\[
\norm f_D={\rm E}\left[\sup_{t\in[0,1]}\left\{\abs{f(t)}Z_t\frac{\bm{1}(U_t>u)}{1-u}\right\}\right],\qquad f\in E[0,1].
\]
\end{theorem}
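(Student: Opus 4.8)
The plan is to establish the three assertions separately: that each $Y_t$ is uniformly distributed on $(0,1)$, that the upper tail of $\bfY-\bfone$ admits the claimed $D$-norm representation (so that $\bfY$ is a GPCP), and that $\bfY$ is continuous in probability. The overall strategy mirrors the proof of Theorem \ref{theo:multivariate_piecing_together}, but since the index set $[0,1]$ is no longer finite, the decomposition over subsets $K\subset\{1,\dots,d\}$ must be replaced by a conditioning argument exploiting the generator representation $V_t=\max(M,-U/Z_t)$ of the GPP underlying $\bfQ$.

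First I would treat the margins. Fixing $t$ and splitting according to $\set{U_t\le u}$ and $\set{U_t>u}$, and using ${\rm P}(U_t\le u)=u$, the independence of $\bfQ$ and $\bfU$, and the uniformity of $Q_t$, a short computation gives ${\rm P}(Y_t\le x)=x$ for $0\le x\le 1$, exactly as in the finite-dimensional case.

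The central step is the upper tail. Fix $f\in\bar E^-[0,1]$ with $\norm f_\infty$ small enough that $1+f(t)>u$ for all $t$ and that the GPP upper-tail regime is reached. For each $t$ the constraint $Y_t\le 1+f(t)$ is automatically satisfied when $U_t\le u$, while for $U_t>u$ it reads $Q_t\le 1+f(t)/(1-u)$, i.e.\ $V_t=\max(M,-U/Z_t)\le f(t)/(1-u)$; since $f(t)/(1-u)>K$ this is equivalent to $U\ge\abs{f(t)}Z_t/(1-u)$, the modification producing $\bfQ$ from $\bfV$ affecting only values below $K$ and hence dropping out here. Conditioning on $(\bfU,\bfZ)$, which are independent of the uniform variable $U$, the event $\set{\bfY\le\bfone+f}$ becomes $\set{U\ge W}$ with
\[
W:=\sup_{t\in[0,1]}\frac{\abs{f(t)}Z_t\bm{1}(U_t>u)}{1-u}.
\]
For $\norm f_\infty$ small one has $W\le c\norm f_\infty/(1-u)\le 1$, so that ${\rm P}(U\ge W\mid\bfU,\bfZ)=1-W$; taking expectations yields ${\rm P}(\bfY\le\bfone+f)=1-{\rm E}(W)=1-\norm f_D$ with the asserted $D$-norm. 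As in the finite-dimensional remark, the thinned generator $\widetilde Z_t:=Z_t\bm{1}(U_t>u)/(1-u)$ is nonnegative, bounded by $c/(1-u)$, and satisfies ${\rm E}(\widetilde Z_t)=1$ by independence of $\bfZ$ and $\bfU$, so it is a legitimate generator even though it need not be continuous, which is consistent with $\bfY$ being only continuous in probability.

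Finally I would verify continuity in probability. Since ${\rm P}(U_t=u)=0$ for every $t$ and $\bfU$ has continuous paths, for $t_n\to t$ we have $\bm{1}(U_{t_n}>u)=\bm{1}(U_t>u)$ eventually, almost surely. On $\set{U_t<u}$ continuity of $\bfU$ gives $Y_{t_n}=U_{t_n}\to U_t=Y_t$, while on $\set{U_t>u}$ continuity in probability of $\bfQ$ gives $Y_{t_n}=u+(1-u)Q_{t_n}\to u+(1-u)Q_t=Y_t$ in probability; hence $Y_{t_n}\to Y_t$ in probability. Together with the uniform margins and the upper-tail representation, this shows that $\bfY$ is a GPCP with the stated $D$-norm. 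I expect the main obstacle to be the rigorous handling of the supremum $W$: its measurability and the passage from the pointwise constraints $U\ge\abs{f(t)}Z_t/(1-u)$ over all $t$ to the single inequality $U\ge W$, since this is precisely the point at which the infinite index set replaces the finite maximum of Theorem \ref{theo:multivariate_piecing_together}, and it relies on the sample-path regularity of $\bfU$ and $\bfZ$ together with the fact that $f$ has only finitely many discontinuities.
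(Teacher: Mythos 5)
Your proof is correct and follows essentially the same route as the paper's: the coordinates with $U_t\le u$ are neglected, the event $\set{\bfY\le\bfone+f}$ is reduced via the representation $V_t=\max(M,-U/Z_t)$ to $\set{U\ge W}$ with $W=\sup_{t\in[0,1]}\abs{f(t)}Z_t\bm{1}(U_t>u)/(1-u)$, and the uniform variable $U$ is integrated out to give $1-{\rm E}(W)$. The only differences are cosmetic: you make the conditioning on $(\bfU,\bfZ)$ explicit and spell out the continuity-in-probability argument, which the paper dismisses as elementary.
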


Note that ${\rm
E}\left[\sup_{t\in[0,1]}\left\{\abs{f(t)}Z_t\bm{1}(U_t>u)/(1-u)\right\}\right]$ is well
defined, due to the continuity of $\bfZ$ and $\bfU$. The \emph{thinned} generator process
\[
\widetilde \bfZ =\left\{Z_t \frac{\bm{1}(U_t>u)}{1-u}\right\}_{t\in[0,1]}
\]
satisfies
\[
0\le \widetilde Z_t\le \frac c{1-u},\quad {\rm E}(\widetilde Z_t)=1,\qquad t\in[0,1],
\]
and it is continuous in probability.

\begin{proof}
Each $Y_t$ is by Theorem \ref{theo:multivariate_piecing_together} uniformly distributed on
$(0,1)$ . Continuity in probability follows from elementary arguments. Choose $f\in\bar
E^-[0,1]$ with $\norm f_\infty<(1-u)\min\left\{\abs M,\abs K,c^{-1}\right\}$. We have
\begin{equation*}
{\rm P}(Y_t\le 1+f(t),\,t\in[0,1])
%&={\rm P}\big[U_t\bm{1}(U_t\le u)+\{u+(1-u)Q_t\}\bm{1}(U_t>u)\le 1+f(t),\,t\in[0,1]\big]\\
={\rm P}\big[\{u+(1-u)Q_t\}\bm{1}(U_t>u)\le 1+f(t),\,t\in[0,1]\big].
\end{equation*}
Note that the term $U_t\bm{1}(U_t\le u)$ can be neglected since $U_t\le u$ implies $U_t\le
1+f(t)$ and, due to the restrictions on $f$, $1+f(t)>u > 0$, $t\in[0,1]$. Analogously, we
may rewrite the probability from above as
\begin{align*}
&{\rm P}\big\{(1-u)Q_t \bm{1}(U_t>u)\le 1-u+f(t),\,t\in[0,1]\big\}\\
%&={\rm P}\left\{Q_t \bm{1}(U_t>u)\le 1+\frac{f(t)}{1-u},\,t\in[0,1]\right\}\\
&={\rm P}\left\{(Q_t-1)\bm{1}(U_t>u)\le 1- \bm{1}(U_t>u) + \frac{f(t)}{1-u},\,t\in[0,1]\right\}\\
%&={\rm P}\left\{(Q_t-1)\bm{1}(U_t>u)\le \bm{1}(U_t\le u) + \frac{f(t)}{1-u},\,t\in[0,1]\right\}\\
&={\rm P}\left\{Q_t-1 - Q_t\bm{1}(U_t\le u)\le \frac{f(t)}{1-u}\bm{1}(U_t\le u) + \frac{f(t)}{1-u}\bm{1}(U_t> u),\,t\in[0,1]\right\}\\
&={\rm P}\left\{Q_t-1\le \frac{f(t)}{1-u} \bm{1}(U_t>u),\,t\in[0,1]\right\}
\end{align*}
where the last equality is again a consequence of neglecting the terms corresponding to the
case $U_t\le u$; note that the restrictions on $f$ imply $f(t)\ge u-1$. This probability has
by \eqref{eqn:definition_of_standard_GPP} and \eqref{eqn:distribution_of_GPCP_upper_tail}
the representation
\begin{align*}
{\rm P}\left\{V_t\le \frac{f(t)}{1-u} \bm{1}(U_t>u),\,t\in[0,1]\right\}
%&={\rm P}\left\{-U\le f(t) Z_t \frac{\bm{1}(U_t>u)}{1-u},\,t\in[0,1]\right\}\\
%&={\rm P}\left\{U\ge \abs{f(t)} Z_t \frac{\bm{1}(U_t>u)}{1-u},\,t\in[0,1]\right\}\\
&={\rm P}\left[U\ge \sup_{t\in[0,1]}\left\{\abs{f(t)} Z_t \frac{\bm{1}(U_t>u)}{1-u}\right\}\right]\\
&=1- {\rm E}\left[\sup_{t\in[0,1]}\left\{\abs{f(t)} Z_t \frac{\bm{1}(U_t>u)}{1-u}\right\}\right]
\end{align*}
which completes the proof.
\end{proof}

In what follows we justify the functional PT approach by extending Proposition \ref{prop:justification_of_multivariate_pt-approach}. We say that a copula process $\bfU\in C[0,1]$ is in the \emph{functional domain of attraction} of a max-stable process $\bfeta\in C[0,1]$, denoted by $\bfU\in\mathcal D(\bfeta)$, if
\begin{equation*}
{\rm P}\{n(\bfU-1)\le f\}^n\to_{n\to\infty}{\rm P}(\bfeta\le f),\qquad f\in\bar E^-[0,1].
\end{equation*}
The max-stability of $\bfeta$ is characterized by the equation
\[
{\rm P}\left(\bfeta\le \frac fn\right)^n={\rm P}(\bfeta\le f),\qquad n\in\N,\,f\in \bar E^-[0,1].
\]
From \citet{aulfaho11} we know that there exists a generator process $\bfZ=(Z_t)_{t\in[0,1]}\in C[0,1]$ such that for $f\in\bar E^-[0,1]$
\[
{\rm P}(\bfeta\le f)=\exp\left[-{\rm E}\left\{\sup_{t\in[0,1]}(\abs{f(t)}Z_t)\right\}\right] =\exp\left(-\norm f_D\right),
\]
which shows in particular that the process $\bfeta$ has  standard
negative exponential margins. A continuous max-stable process
(MSP) with standard negative exponential margins will be called a
\emph{standard} MSP. We refer to \citet{aulfaho11} for a detailed
investigation of the functional domain of attraction condition,
which is weaker than that based on weak convergence developed in
\citet{dehal01}.

The next result, which justifies the  functional PT-approach, is now an immediate
consequence of Proposition \ref{prop:justification_of_multivariate_pt-approach}. The term
$o(\norm{\bfone-\bfv})$ can again be dropped for $(v_1,\dots,v_d)$ large enough, if the
process $\bfU$ is itself a GPCP.

\begin{prop}
Suppose that the copula process $\bfU\in C[0,1]$ satisfies $\bfU\in\mathcal D(\bfeta)$,
where $\bfeta\in C[0,1]$ is a standard MSP with generator process
$\bfZ=(Z_t)_{t\in[0,1]}\in C[0,1]$. Choose a threshold $u\in(0,1)$ and arbitrary indices
$0\le t_1<\dots<t_d\le 1$, $d\in\N$.  If the process $\bfV$ in the definition
\eqref{eqn:functional_pt_version} of the PT-process $\bfY$ has this generator $\bfZ$ as
well, then we have
\begin{align*}
&{\rm P}\left(U_{t_j}>v_{t_j},\,1\le j\le d\right)\\
&={\rm P}\left\{Y_{t_j}>u+(1-u)v_j,\,1\le j\le d \mid U_{t_j}>u,\,1\le j\le d\right\} + o(\norm{\bfone-\bfv}),
\end{align*}
uniformly for $\bfv\in[u,1]^d$.
\end{prop}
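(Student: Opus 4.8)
The plan is to exploit the fact that the asserted identity constrains the processes $\bfU$ and $\bfY$ only through their values at the finitely many indices $0\le t_1<\dots<t_d\le 1$, so the whole statement reduces to the already-proved finite-dimensional Proposition \ref{prop:justification_of_multivariate_pt-approach}. Concretely, I would write $\bfU^{(d)}:=(U_{t_1},\dots,U_{t_d})$ and $\bfY^{(d)}:=(Y_{t_1},\dots,Y_{t_d})$ and show that $(\bfU^{(d)},\bfY^{(d)})$ is exactly a finite-dimensional PT configuration of the type treated there, with common threshold $u_j=u$ and with limiting $D$-norm generated by $(Z_{t_1},\dots,Z_{t_d})$. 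Once this identification is in place, the conclusion follows by applying that proposition verbatim.

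The first substantial step is to extract from the functional domain of attraction condition that $\bfU^{(d)}$ follows a copula lying in $\mathcal D(G^{(d)})$, where $G^{(d)}$ has $D$-norm generated by $(Z_{t_1},\dots,Z_{t_d})$. To this end I would feed into the defining relation ${\rm P}\{n(\bfU-1)\le f\}^n\to{\rm P}(\bfeta\le f)$ the test function $f\in\barE$ that takes the value $x_j\le 0$ at $t_j$, $1\le j\le d$, and vanishes elsewhere; this $f$ is admissible since it is bounded, nonpositive and has only finitely many discontinuities. Because $\bfU$ is a copula process, the constraints imposed by $f$ away from the points $t_j$ are vacuous, so ${\rm P}\{n(\bfU-1)\le f\}={\rm P}(U_{t_j}\le 1+x_j/n,\,1\le j\le d)$, while on the limit side ${\rm P}(\bfeta\le f)=\exp[-{\rm E}\{\sup_{t}(\abs{f(t)}Z_t)\}]=\exp[-{\rm E}\{\max_{1\le j\le d}(\abs{x_j}Z_{t_j})\}]$. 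This identifies the limiting EVD of $\bfU^{(d)}$ as the one with $D$-norm generated by $(Z_{t_1},\dots,Z_{t_d})$, and Corollary \ref{cor:expansion_of_copula} then turns the convergence into the required tail expansion \eqref{eqn:expansion_of_copula} for the copula of $\bfU^{(d)}$.

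The second step is to recognize the finite-dimensional projection of the PT-process. Restricting \eqref{eqn:functional_pt_version} to $t_1,\dots,t_d$ gives $Y_{t_j}=U_{t_j}\bm{1}(U_{t_j}\le u)+\{u+(1-u)Q_{t_j}\}\bm{1}(U_{t_j}>u)$, which is precisely the multivariate PT vector \eqref{eqn:definition_of_pt-vector} with threshold $u_j=u$ built from $\bfU^{(d)}$ and from the GPD-copula vector $(Q_{t_1},\dots,Q_{t_d})$; by the construction of the GPCP the latter has generator $(Z_{t_1},\dots,Z_{t_d})$, which coincides with the generator of the limit obtained above, and the independence of $\bfU$ and $\bfZ$ descends to the projections. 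With these identifications, Proposition \ref{prop:justification_of_multivariate_pt-approach} applies directly (with $u_j=u$, $U_j=U_{t_j}$, $Y_j=Y_{t_j}$) and yields the claimed equality uniformly for $\bfv\in[u,1]^d$; exactly as in the finite-dimensional case, the error term disappears if $\bfU$ is itself a GPCP. I expect the only delicate point to be the bookkeeping in the first step, namely checking that the point-supported, discontinuous test function $f$ lies in $\barE$ and that the resulting supremum collapses to $\max_{1\le j\le d}(\abs{x_j}Z_{t_j})$, so that the restricted $D$-norm is generated exactly by $(Z_{t_1},\dots,Z_{t_d})$. Everything else is routine, which justifies reading the result as an immediate consequence of Proposition \ref{prop:justification_of_multivariate_pt-approach}.
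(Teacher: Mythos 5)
Your proposal is correct and follows exactly the route the paper intends: the paper states the result as an immediate consequence of Proposition \ref{prop:justification_of_multivariate_pt-approach}, and your argument simply makes that reduction explicit by projecting onto the finite index set, using a point-supported test function in the functional domain-of-attraction condition to identify the finite-dimensional $D$-norm as the one generated by $(Z_{t_1},\dots,Z_{t_d})$, and checking that $(Y_{t_1},\dots,Y_{t_d})$ is the multivariate PT vector built from $(U_{t_1},\dots,U_{t_d})$ and the GPD-copula vector $(Q_{t_1},\dots,Q_{t_d})$ with the same generator. The bookkeeping you flag (admissibility of the discontinuous $f$ and collapse of the supremum to the maximum over $t_1,\dots,t_d$) goes through since $\bfU$ is a copula process, so no gap remains.
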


\section*{Acknowledgments}

The first author was supported by DFG Grant FA 262/4-1. This paper has benefited
substantially from discussions during the \emph{Workshop on Copula Models and
Dependence}, June 6--9, 2011, Universit\'{e} de Mont\-r\'{e}al. The second author is in particular
grateful to Johanna Ne\v{s}lehov\'{a} and Bruno R\'{e}millard for stimulating the inclusion of the
empirical copula in the preceding PT approach.

\end{document}